\newtheorem{basethm}{BaseTheorem}
\newtheorem{lemma}[basethm]{Lemma}
\newtheorem{theorem}[basethm]{Theorem}
\newtheorem{proposition}[basethm]{Proposition}
\newtheorem{definition}[basethm]{Definition}
\DeclareDocumentCommand\zerovec{o}{\IfNoValueTF{#1}{\mathbb{O}}{\mathbb{O}_{#1}}}
\DeclareDocumentCommand\R{}{\mathbb{R}}
\DeclareDocumentCommand\rank{o}{\operatorname{rank}\IfValueTF{#1}{\left(#1\right)}{}}
\DeclareDocumentCommand\orderO{o}{\mathcal{O}\IfValueTF{#1}{\ensuremath{\left(#1\right)}{}}}
\DeclareDocumentCommand\orderTheta{o}{\ensuremath{\Theta\IfValueTF{#1}{\left(#1\right)}}}
\DeclareDocumentCommand\proj{oo}{\IfValueTF{#1}{\operatorname{proj}{}_{#1}}{%
  \operatorname{proj}{}}\IfValueTF{#2}{\left(#2\right)}{}}
\DeclareDocumentCommand\D{}{\mathcal{D}}
\DeclareDocumentCommand\V{}{\mathcal{V}}
\DeclareDocumentCommand\A{}{\mathcal{A}}
\DeclareDocumentCommand\E{}{\mathcal{E}}
\DeclareDocumentCommand\targets{}{\mathcal{T}}
\DeclareDocumentCommand\singletons{}{\mathcal{S}}
\DeclareDocumentCommand\inNeighbors{}{N^{\text{in}}}
\DeclareDocumentCommand\outNeighbors{}{N^{\text{out}}}
\DeclareDocumentCommand\flowerRel{}{\operatorname{FR}}
\DeclareDocumentCommand\ML{}{\operatorname{ML}}
\DeclareDocumentCommand\conv{o}{\operatorname{conv}\IfValueTF{#1}{\left(#1\right)}{}}
\author{Emily Schutte\footnote{University of Luxembourg, Luxembourg. E-mail: emilyschutte@live.nl} \and Matthias Walter\footnote{Department of Applied Mathematics, University of Twente, The Netherlands. E-mail: m.walter@utwente.nl} \thanks{The second author acknowledges funding support from the Dutch Research Council (NWO) on grant number OCENW.M20.151.}}
\title{Relaxation strength for multilinear optimization: McCormick strikes back}
\begin{document}

\maketitle

\begin{abstract}
  We consider linear relaxations for multilinear optimization problems.
  In a recent paper, Khajavirad proved that the extended flower relaxation is at least as strong as the relaxation of any recursive McCormick linearization (Operations Research Letters 51 (2023) 146--152).
  In this paper we extend the result to more general linearizations, and present a simpler proof.
  Moreover, we complement Khajavirad's result by showing that the intersection of the relaxations of such linearizations and the extended flower relaxation are equally strong.
\end{abstract}

\section{Introduction}
\label{sec_intro}

\DeclareDocumentCommand\I{}{\mathcal{I}}

We consider multilinear optimization problems
\begin{subequations}
  \label{model_multilinear}
  \begin{alignat}{7}
    & \text{min }
      & \sum_{I \in \I_0} c^0_I \prod_{v \in I} x_v \\
    & \text{s.t. }
      & \sum_{I \in \I_j} c^j_I \prod_{v \in I} x_v &\leq b_j &\quad& \forall j \in \{1,2,\dotsc,m\} \\
    & & x_v &\in [\ell_v, u_v] &\quad& \forall v \in V,
  \end{alignat}
\end{subequations}
where $V$ denotes the variables and $\I_0,\I_1,\I_2,\dotsc,\I_m \subseteq V$ are families of subsets thereof, where $c^j_I \in \R$ and $b_j \in \R$ are the coefficients of the \emph{monomials}, and the right-hand side, respectively, and where $\ell, u \in \R^V$ are the bounds on the variables.
Note that every constraint (and the objective function) is a multilinear polynomial, which means that every variable has an exponent equal to either $0$ or $1$ in each monomial.
We refer to the excellent survey~\cite{BurerL12} by Burer and Letchford for an overview of approaches for tackling mixed-integer nonlinear optimization problems in general.
One of these strategies is to introduce auxiliary variables for intermediate nonlinear terms~\cite{McCormick76}.
In our case, a straight-forward linearization is to introduce a variable $z_I$ for every subset $I$ of variables that appears in any of these polynomials, which yields the equivalent problem
\begin{subequations}
  \label{model_multilinear_linearized}
  \begin{alignat}{7}
    & \text{min }
      & \sum_{I \in \I_0} c^0_I z_I \label{model_multilinear_linearized_obj} \\
    & \text{s.t. }
      & \sum_{I \in \I_j} c^j_I z_I &\leq b_j &\quad& \forall j \in \{1,2,\dotsc,m\} \label{model_multilinear_linearized_ineqs} \\
    & & z_I &= \prod_{v \in I} x_v &\quad& \forall I \in \E \label{model_multilinear_linearized_prod} \\
    & & x_v &\in [\ell_v, u_v] &\quad& \forall v \in V, \label{model_multilinear_linearized_domain}
  \end{alignat}
\end{subequations}
where $\E \coloneqq \bigcup_{j=0}^m \I_j$ denotes the union of all involved variable subsets.
In the unconstrained case ($m=0$) it is known well~\cite{TawarmalaniS02} that there there exists an optimal solution in which each~$x_v$ is at its bound, that is, $x_v \in \{\ell_v, u_v\}$ holds for all $v \in V$.
Hence, by an affine transformation we can replace~\eqref{model_multilinear_linearized_domain} by $x_v \in \{0,1\}$ for all $v \in V$.
We now focus on linear relaxations for constraints~\eqref{model_multilinear_linearized_prod} and the requirement that $x$ is binary, whose set of feasible solutions is called the \emph{multilinear set}.
It is parameterized by the pair $G = (V, \E)$, which we can be interpreted as a hypergraph whose nodes index the original $x$-variables and whose hyperedges correspond to the (multlilinear) product terms~\cite{DelPiaK17}.
We will not rely on hypergraph concepts, but merely use these to visualize instances.
However, many previous results exploit hypergraph structures such as various cycle concepts~\cite{DelPiaK18,DelPiaK21,DelPiaD21,DelPiaW22,DelPiaW23}.

Every hypergraph $G = (V, \E)$ gives rise to a \emph{multilinear polytope}, defined as the convex hull
\begin{gather*}
  \ML(G) \coloneqq \conv \{ (x,z) \in \{0,1\}^V \times \{0,1\}^{\E} \mid z_I = \prod_{v \in I} x_v ~\forall I \in \E \}.
\end{gather*}
of the multilinear set.
Its simplest polyhedral relaxation is the \emph{standard relaxation}
\begin{subequations}
  \label{model_standard}
  \begin{alignat}{7}
    z_I &\leq x_v &\quad& \forall v \in I \in \E \label{model_standard_short} \\
    z_I + \sum_{v \in I} (1-x_v) &\geq 1 &\quad& \forall I \in \E \label{model_standard_long} \\
    z_I &\geq 0 &\quad& \forall I \in \E \label{model_standard_domain_edge} \\
    x_v &\in [0,1] &\quad& \forall v \in V, \label{model_standard_domain_node}
  \end{alignat}
\end{subequations}
which dates back to Fortet~\cite{Fortet60a,Fortet60b} and Glover and Wolsey~\cite{GloverW73,GloverW74}.
Unfortunately, this relaxation is often very weak~\cite{LuedtkeEtAl2012}.

In this paper we relate two strengthing techniques to each other: the first is by augmenting~\eqref{model_standard} with \emph{(extended) flower inequalities}~\cite{DelPiaK18,Khajavirad23}, which are additional inequalities valid for $\ML(G)$.
We will define the inequalities and the tightened relaxation in \cref{sec_flower}.
The second technique works by not linearizing each variable $z_I = \prod_{v \in I} x_v$ independently, but by using auxiliary variables for two disjoint subsets $I_1,I_2 \subseteq I$ with $I = I_1 \cup I_2$ instead, and inequalities similar to~\eqref{model_standard_short} and~\eqref{model_standard_long} for the product $z_I = z_{I_1} \cdot z_{I_2}$.
While this alone does not strengthen the relaxation, it is well known that it does so in case these variables $z_{I_1}, z_{I_2}$ appears in several linearization steps.
We will provide a concise definition of such a \emph{recursive McCormick linearization} in \cref{sec_mccormick}.

In a recent paper~\cite{Khajavirad23}, Khajavirad showed that extended flower inequalities dominate recursive McCormick linearizations in the sense that the latter can never give stronger linear programming bounds than the former.
Our main contribution is presented in \cref{sec_equivalent}, and states that the converse statement holds in some sense as well: one may need to intersect several McCormick linearizations with each other to achieve the same strength as that of extended flower inequalities.
This may be considered a big drawback, but it is worth noting that each McCormick linearization induces only a polynomial number of additional variables and constraints, while there exist exponentially many flower inequalities whose separation problem is NP-hard~\cite{DelPiaKS20}.
In fact, our result applies to linearizations that are more general than recursive McCormick linearizations, and we provide a simpler proof of Khajavirad's main result.
In \cref{sec_conclusions} we conclude the paper with the observation that also computationally both approaches are equivalent, and highlight an interesting open problem.

\section{Flower relaxation}
\label{sec_flower}

To simplify our notation we will write $z_{\{v\}} \coloneqq x_v$ for all $v \in V$, and denote by $\singletons \coloneqq \{ \{v\} \mid v \in V \}$ the corresponding set of singletons.
Note that this may in principle lead to an ambiguity in case $\{v\}$ is also part of $\E$.
However, already the standard relaxation~\eqref{model_standard} would imply $z_{\{v\}} = x_v$ in this case.
Hence, we from now on assume that $|I| \geq 2$ holds for all $I \in \E$.
Let us now define extended flower inequalities~\cite{Khajavirad23}, which are a generalization of flower inequalities~\cite{DelPiaK18}.

\begin{definition}[extended flower inequalities, extended flower relaxation]
  Let $I \in \E$ and let $J_1,\dotsc,J_k \in \E \cup \singletons$ be such that $J_1 \cup J_2 \cup \dotsb \cup J_k \supseteq I$ and $J_i \cap I \neq \emptyset$ holds for $i=1,2,\dotsc,k$.
  The \emph{extended flower inequality centered at~$I$ with neighbors $J_1,J_2,\dotsc,J_k$} is the inequality
  \begin{alignat}{7}
    z_I + \sum_{i=1}^k (1-z_{J_i}) \geq 1. \label{eq_flower}
  \end{alignat}
  The \emph{extended flower relaxation} $\flowerRel(G) \subseteq \R^{\E \cup \singletons}$ is defined as the intersection of $[0,1]^{\E \cup \singletons}$ with all extended flower inequalities.
\end{definition}

Note that an extended flower inequality centered at~$I$ with neighbors $J_1,J_2,\dotsc,J_k$ is called a \emph{flower inequality} if $J_i \cap J_j \cap I = \emptyset$ holds for all distinct $i,j$, that is, if the intersections $(J_1 \cap I), (J_2 \cap I), \dotsc, (J_k \cap I)$ form a partition of $I$.
The following proposition establishes validity of extended flower inequalities for the multilinear set.

\begin{proposition}
  For each hypergraph $G = (V,\E)$ we have $\ML(G) \subseteq \flowerRel(G)$.
\end{proposition}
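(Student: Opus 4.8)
The plan is to show that every vertex of $\ML(G)$ satisfies each extended flower inequality~\eqref{eq_flower}; since $\ML(G)$ is the convex hull of such vertices and each inequality is linear, this suffices. A vertex of $\ML(G)$ is a point $(x,z)$ with $x \in \{0,1\}^V$ and $z_I = \prod_{v \in I} x_v$ for all $I \in \E$; recalling the convention $z_{\{v\}} = x_v$, we also have $z_J = \prod_{v \in J} x_v$ for all $J \in \singletons$, so in fact $z_J \in \{0,1\}$ and $z_J = \prod_{v \in J} x_v$ holds uniformly for all $J \in \E \cup \singletons$ appearing in the inequality. Since all these values lie in $[0,1]$, such a point also lies in $[0,1]^{\E \cup \singletons}$, so it remains only to check the inequality itself.

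First I would fix such a vertex $(x,z)$ and an extended flower inequality centered at $I$ with neighbors $J_1,\dotsc,J_k$, and split into two cases according to the value of $z_I$. If $z_I = 1$, then since every term $1 - z_{J_i}$ is nonnegative (as $z_{J_i} \in [0,1]$), the left-hand side of~\eqref{eq_flower} is at least $1$, and we are done. If $z_I = 0$, then $\prod_{v \in I} x_v = 0$, so there exists some $v^\star \in I$ with $x_{v^\star} = 0$. The hypothesis $J_1 \cup \dotsb \cup J_k \supseteq I$ guarantees that $v^\star \in J_i$ for at least one index $i$, and for that index $z_{J_i} = \prod_{v \in J_i} x_v = 0$ because the product includes the factor $x_{v^\star} = 0$. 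Hence $1 - z_{J_i} = 1$ for that $i$, and since the remaining terms $1 - z_{J_j}$ and $z_I$ are all nonnegative, the left-hand side of~\eqref{eq_flower} is again at least $1$.

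Having verified the inequality at every vertex, I would conclude by the standard argument: $\ML(G)$ is by definition the convex hull of the finite set of such points, each satisfies every extended flower inequality and lies in $[0,1]^{\E \cup \singletons}$, and both of these constraints describe half-spaces (respectively a box), which are convex; therefore every point of $\ML(G)$ satisfies all of them, i.e. $\ML(G) \subseteq \flowerRel(G)$.

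There is essentially no serious obstacle here — the proposition is a routine validity check. The only place demanding a little care is the bookkeeping around the singleton convention: one must make sure that the neighbors $J_i$ drawn from $\singletons$ are handled by the same product formula $z_{J_i} = \prod_{v \in J_i} x_v$ (which for a singleton $\{v\}$ just reads $z_{\{v\}} = x_v$), and that the condition $J_i \cap I \neq \emptyset$, while part of the definition, is not actually needed for validity — only the covering condition $\bigcup_i J_i \supseteq I$ is used. I would state this cleanly so the reader sees exactly which hypotheses drive the argument.
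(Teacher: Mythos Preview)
Your proof is correct and follows essentially the same approach as the paper: verify the inequality at the $\{0,1\}$ vertices of $\ML(G)$ and pass to the convex hull. The only cosmetic difference is the case split---you split on the value of $z_I$, while the paper splits on whether $\sum_i (1-z_{J_i}) \geq 1$; these are contrapositives of the same implication (all $z_{J_i}=1 \Rightarrow z_I=1$ versus $z_I=0 \Rightarrow$ some $z_{J_i}=0$).
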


\begin{proof}
  Let $z^\star \in \ML(G) \cap \{0,1\}^{\singletons \cup \E}$ be a vertex of $\ML(G)$ and consider an extended flower inequality centered at $I \in \E$ with neighbors $J_1,J_2,\dotsc,J_k \in \E \cup \singletons$.
%   Clearly, each term on the left-hand side of~\eqref{eq_flower} is binary and hence nonnegative.
  If $\sum_{i=1}^k (1-z^\star_{J_i}) \geq 1$, then the extended flower inequality is clearly satisfied.
  Otherwise, $\sum_{i=1}^k (1-z^\star_{J_i}) = 0$ implies $z^\star_{J_i} = 1$ for all $i \in \{1,2,\dotsc,k\}$.
  From this and $J_1 \cup J_2 \cup \dotsb \cup J_k \supseteq I$ we obtain $z^\star_{\{v\}} = 1$ for all $v \in I$, which implies $z^\star_I = \prod_{v \in I} z^\star_{\{v\}} = 1$ since $z^\star \in \ML(G)$.
  This shows that the left-hand side of~\eqref{eq_flower} is at least $1$, which concludes the proof.
\end{proof}

A key property of the extended flower relaxation is that it is compatible with projections in the sense that the (orthogonal) projection of the extended flower relaxation is the flower relaxation of the corresponding subgraph:

\begin{lemma}
  \label{thm_flower_project}
  Let $G = (V,\E)$ be a hypergraph, let $\E' \coloneqq \E \setminus \{I^\star\}$ be the set of hyperedges with $I^\star \in \E$ removed, and let $G' = (V, \E')$ be the corresponding hypergraph.
  Then the projection of $\flowerRel(G)$ onto the $z_I$-variables for all $I \in \E' \cup \singletons$ is equal to $\flowerRel(G')$.
\end{lemma}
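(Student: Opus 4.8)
The plan is to establish the two inclusions separately. The inclusion ``$\subseteq$'' is immediate: since $\E' \subseteq \E$, every extended flower inequality of $G'$ (its center lies in $\E' \subseteq \E$ and its neighbors in $\E' \cup \singletons \subseteq \E \cup \singletons$) is also an extended flower inequality of $G$, and it constrains only the variables $z_I$ with $I \in \E' \cup \singletons$; since the box constraints are inherited as well, the projection of any point of $\flowerRel(G)$ lies in $\flowerRel(G')$.

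For the reverse inclusion I would fix an arbitrary $z = (z_I)_{I \in \E' \cup \singletons} \in \flowerRel(G')$ and construct a value $z_{I^\star} \in [0,1]$ with $(z, z_{I^\star}) \in \flowerRel(G)$ (this amounts to Fourier--Motzkin elimination of $z_{I^\star}$). The extended flower inequalities of $G$ split into three groups. Those centered at some $I \in \E'$ with no neighbor equal to $I^\star$ are precisely the extended flower inequalities of $G'$, hence already hold for $z$. Those centered at $I^\star$, after discarding the ones in which $I^\star$ itself occurs as a neighbor (these hold trivially, as their left-hand sides simplify to at least $1$), read $z_{I^\star} \ge 1 - \sum_{i=1}^k (1 - z_{J_i}) \eqqcolon \ell$, with neighbors $J_1,\dots,J_k \in \E' \cup \singletons$ satisfying $J_1 \cup \dots \cup J_k \supseteq I^\star$. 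Those centered at some $I \in \E'$ having $I^\star$ among the neighbors give, when $I^\star$ occurs exactly once among them (the general case reduces to this one, since repeating a neighbor only weakens an inequality, given $z_{I^\star} \le 1$), an upper bound $z_{I^\star} \le z_I + \sum_{j=1}^m (1 - z_{J'_j}) \eqqcolon u$. It therefore suffices to pick $z_{I^\star} \in [0,1]$ lying above all such $\ell$ and below all such $u$; I would take $z_{I^\star} \coloneqq \max(0, \sup \ell)$ and check (i) every $\ell \le 1$, (ii) every $u \ge 0$, and (iii) every $\ell$ is at most every $u$.

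Claims (i) and (ii) are immediate from $z \in [0,1]^{\E' \cup \singletons}$: in $\ell = 1 - \sum_{i=1}^k(1 - z_{J_i})$ there is at least one neighbor (since $I^\star \neq \emptyset$) and each summand is nonnegative, so $\ell \le z_{J_1} \le 1$; and $u = z_I + \sum_j(1 - z_{J'_j}) \ge 0$. The core of the proof is (iii). Fix $\ell$ as above (center $I^\star$, neighbors $J_1,\dots,J_k$) and $u$ as above (center $I \in \E'$, other neighbors $J'_1,\dots,J'_m$, with $I^\star \cup J'_1 \cup \dots \cup J'_m \supseteq I$ and $I^\star \cap I \neq \emptyset$). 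The inequality $\ell \le u$ rearranges to $z_I + \sum_{j=1}^m(1 - z_{J'_j}) + \sum_{i=1}^k(1 - z_{J_i}) \ge 1$. Put $S \coloneqq \{ i : J_i \cap I \neq \emptyset \}$. Then $\{J'_1,\dots,J'_m\} \cup \{J_i : i \in S\}$ is a valid neighbor set for an extended flower inequality of $G'$ centered at $I$: all these sets lie in $\E' \cup \singletons$ and meet $I$ by construction, and their union contains $I$ because any $w \in I$ lies in $I^\star \cup J'_1 \cup \dots \cup J'_m$, and if $w$ is not covered by the $J'_j$ then $w \in I^\star \subseteq J_1 \cup \dots \cup J_k$, so $w \in J_i$ for some $i$ with $w \in J_i \cap I$, i.e. $i \in S$. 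Hence $z_I + \sum_{j=1}^m(1 - z_{J'_j}) + \sum_{i \in S}(1 - z_{J_i}) \ge 1$, and adding the nonnegative terms $(1 - z_{J_i})$ for $i \notin S$ yields the claimed inequality, so $\ell \le u$.

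It then remains to collect the pieces. With $z_{I^\star} \coloneqq \max(0, \sup \ell) \in [0,1]$ (the upper bound $1$ uses (i)), every lower-bound inequality holds by the choice of $z_{I^\star}$; every upper-bound inequality holds since $z_{I^\star} \le \min(1, \inf u)$ by (ii) and (iii), and the repeated-neighbor versions discarded earlier then follow from their single-occurrence counterparts together with $z_{I^\star} \le 1$; and all remaining extended flower inequalities of $G$ are exactly those of $G'$ and hold because $z \in \flowerRel(G')$. Together with $z_{I^\star} \in [0,1]$ this gives $(z, z_{I^\star}) \in \flowerRel(G)$. I expect the main obstacle to be step (iii): recognizing that a lower-bound inequality centered at $I^\star$ and an upper-bound inequality using $I^\star$ as a neighbor can be ``glued'' into a single extended flower inequality of $G'$, and in particular checking that, after discarding the neighbors $J_i$ that miss $I$, the union of the remaining neighbors still covers the center $I$.
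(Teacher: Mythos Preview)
Your proof is correct and follows essentially the same route as the paper: both directions are argued as you do, and the reverse inclusion is obtained by Fourier--Motzkin elimination of $z_{I^\star}$, the heart of which is your step~(iii) --- combining a flower inequality centered at $I^\star$ with one having $I^\star$ among its neighbors into a flower inequality of $G'$ (with center $I$ and neighbors $\{J'_j\} \cup \{J_i : J_i \cap I \neq \emptyset\}$) plus the bound terms $1 - z_{J_i} \ge 0$ for the remaining $J_i$. Your write-up is somewhat more explicit than the paper's in that you spell out the construction of $z_{I^\star}$ and dispose of the degenerate cases ($I^\star$ occurring as its own neighbor, or as a repeated neighbor), but the underlying argument is the same.
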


\begin{proof}
  Let $P \subseteq \R^{\E' \cup \singletons}$ denote the projection of $\flowerRel(G)$ onto all variables but $z_{I^\star}$.
  Clearly, every extended flower inequality~\eqref{eq_flower} that does not involve $z_{I^\star}$ is present in both relaxations.
  This already shows $P \subseteq \flowerRel(G')$.

  For the reverse direction we apply Fourier-Motzkin elimination~\cite{Dines19,Fourier27,Motzkin36} to $\flowerRel(G)$ and $z_{I^\star}$.
  To this end, we need to consider pairs of inequalities~\eqref{eq_flower} in which $z_{I^\star}$ appears with opposite signs.
  Such a pair consists of one extended flower inequality centered at $I^\star$ with neighbors $H_1,H_2,\dotsc,H_k \in \E \cup \singletons$ and one extended flower inequality centered at another edge~$J$ with neighbors $I^\star, K_1, K_2, \dotsc, K_\ell \in \E \cup \singletons$.
  The sum of the two inequalities reads
  \begin{gather}
    z_{I^\star} + \sum_{i=1}^k (1-z_{H_i}) + z_J + (1-z_{I^\star}) + \sum_{i=1}^\ell (1-z_{K_i}) \geq 1 + 1
    \label{eq_fourier_motzkin1}
  \end{gather}
  We can assume that the $H_1, H_2, \dotsc, H_k$ are ordered such that $H_i \cap J \neq \emptyset$ holds if and only if $i \leq k'$ holds, where $k' \in \{0,1,\dotsc,k\}$ is a suitable index.
  We can rewrite~\eqref{eq_fourier_motzkin1} as
  \begin{equation*}
    z_J + \sum_{i=1}^{k'} (1-z_{H_i}) + \sum_{i=1}^\ell (1-z_{K_i}) \qquad+\qquad \sum_{i=k'+1}^k (1-z_{H_i}) \geq 1
  \end{equation*}
  which is the sum of the extended flower inequality~\eqref{eq_flower} centered at~$J$ with neighbors $H_1, H_2, \dotsc, H_{k'}, K_1, K_2, \dotsc, K_\ell$ and the bound inequalities $1-z_{H_i} \geq 0$ for all $i \in \{k'+1,k'+2,\dotsc,k\}$.
  By construction and by the choice of~$k'$ the edges (or singletons) $H_1,H_2,\dotsc,H_{k'},K_1,K_2,\dotsc,K_\ell$ indeed cover $J$.
  Hence, the combined inequality is implied by $\flowerRel(G')$, which establishes $P \supseteq \flowerRel(G')$.
  This concludes the proof.
\end{proof}

\section{Recursive linearizations and their relaxations}
\label{sec_mccormick}

For a digraph and a node $w$ we denote by $\outNeighbors(w)$ and $\inNeighbors(w)$ the \emph{successors} and \emph{predecessors} of $w$, i.e., the sets of nodes $u$ for which there is an arc from $w$ to $u$ and from $u$ to $w$, respectively.

\begin{definition}[recursive linearizations]
  Let~$V$ be a finite ground set.
  A \emph{recursive linearization} is a simple digraph $\D = (\V,\A)$ with $\singletons \subseteq \V \subseteq 2^V \setminus \{ \emptyset \}$ that satisfies $I \supseteq J$ for each arc $(I,J) \in \A$ as well as $\bigcup_{J \in \outNeighbors(I)} J = I$ for each $I \in \V \setminus \singletons$.
  We say that~$\D$ is a recursive linearization \emph{of a hypergraph $G = (V, \E)$} if $\E \subseteq \V$ holds and if each set $I \in \V$ with $\inNeighbors(I) = \emptyset$ belongs to $\E \cup \singletons$.
  Such a linearization is called \emph{partitioning} if for every node $I \in \V \setminus \singletons$ all its successors $J,J' \in \outNeighbors(I)$ (with $J \neq J'$) are disjoint, i.e., $J \cap J' = \emptyset$ holds.
  It is called \emph{binary} if $|\outNeighbors(I)| = 2$ holds for each $I \in \V \setminus \singletons$.
  A recursive linearization that is both, partitioning and binary, is called \emph{recursive McCormick linearization}.
\end{definition}

A recursive linearization encodes how each variable $z_I$ with $I \in \E$ is linearized by means of other variables.
First, such a variable~$z_I$ is created for each node $I \in \V$, where $z_{\{v\}}$ is the same as the variable~$x_v$ for each $v \in V$.
It is supposed to encode the product of the~$x_v$ for all $v \in V$, that is, $z_I = \prod_{v \in I} x_v$.
This will actually be done by encoding (using linear inequalities) that $z_I = \prod_{J \in \outNeighbors(I)} z_J$ holds for each $I \in \V$.
By induction we can assume that $z_J = \prod_{v \in J} x_v$ holds, which yields $z_I = \prod_{J \in \outNeighbors(I)} \prod_{v \in J} x_v$.
Since~$I$ is the union of all sets $\outNeighbors(I)$, a variable~$x_v$ appears in this product if and only if $v \in I$ holds.
Unless the linearization is partitioning, the variable may appear multiple times, but this does not harm since $x_v^2 = x_v$ holds for $x_v \in \{0,1\}$.
Notice that a linearization is partitioning if and only if, for every node $I \in \V$ and every $v \in I$, there is a unique path from~$I$ to~$\{v\}$.
\Cref{fig_linear} shows three different linearizations of the same hypergraph.

\begin{figure}[htpb]
  \subfloat[%
    A multilinear optimization problem with its hypergraph representation.
    Nodes are indicated as squares and hyperedges via ellipses.
    \label{fig_linear_problem}
  ]{%
    \begin{minipage}{0.48\textwidth}
      Multilinear optimization problem:
      \[
        \min \textcolor{red}{x_1 x_2 x_3} + \textcolor{green!60!black}{x_2x_3x_4} + \textcolor{purple}{x_1 x_2} \text{ s.t. } x \in \{0,1\}^4
      \]
      Hypergraph:
      \begin{center}
        \begin{tikzpicture}[
          vertex/.style={draw=black, very thick},
          ]
          \node[vertex] (1) at (-1.1,0) {1};
          \node[vertex] (2) at (0.3,0) {2};
          \node[vertex] (3) at (1.5,0) {3};
          \node[vertex] (4) at (3.3,0) {4};

          \draw[red, very thick,densely dotted] (0,0) ellipse (25mm and 10mm);
          \draw[green!60!black, very thick,dashed] (2,0) ellipse (25mm and 10mm);
          % \draw[blue, very thick] (1,0) ellipse (13mm and 7mm);
          \draw[purple, very thick] (-0.5,0) ellipse (13mm and 13mm);

          \node[red] at (-2.2,-1.0) {$\{1, 2, 3\}$};
          \node[green!60!black] at (4.2,-1.0) {$\{2, 3, 4\}$};
          % \node[blue] at (1.0,1.3) {$\{2, 3\}$};
          \node[purple] at (-1.8,1.3) {$\{1, 2\}$};
        \end{tikzpicture}
      \end{center}
    \end{minipage}
  }%
  \hfill%
  \subfloat[%
    The standard linearization $\D^{\text{(b)}}$ with $z_{\{1,2,3\}} = z_{\{1\}} \cdot z_{\{2\}} \cdot z_{\{3\}}$, $z_{\{2,3,4\}} = z_{\{2\}} \cdot z_{\{3\}} \cdot z_{\{4\}}$, $z_{\{1,2\}} = z_{\{1\}} \cdot z_{\{2\}}$ and $z_{\{2,3\}} = z_{\{2\}} \cdot z_{\{3\}}$, which is non-binary but partitioning.
    \label{fig_linear_standard}
  ]{%
    \begin{minipage}{0.48\textwidth}
      \begin{center}
        \begin{tikzpicture}[
          scale=0.9,
          node/.style={draw=black, thick, rounded corners},
          arc/.style={draw=black, very thick, ->},
        ]
          \node[node, red] (123) at (-3.5,0.0) {$\{1,2,3\}$};
          \node[node, green!60!black] (234) at (-3.5,-3.0) {$\{2,3,4\}$};
          \node[node, purple] (12) at (-2.5,-1.5) {$\{1,2\}$};
          % \node[node, blue] (23) at (-2.5,-1.7) {$\{2,3\}$};
          \node[node] (1) at (0,0) {$\{1\}$};
          \node[node] (2) at (0,-1) {$\{2\}$};
          \node[node] (3) at (0,-2) {$\{3\}$};
          \node[node] (4) at (0,-3) {$\{4\}$};
    
          \foreach \u/\v in {123/1, 123/2, 123/3, 234/2, 234/3, 234/4, 12/1, 12/2}{%
            \draw[arc] (\u) -- (\v);
          }
        \end{tikzpicture}
      \end{center}
    \end{minipage}
  }%
  \\[5mm]
  \subfloat[%
    A recursive McCormick linearization $\D^{\text{(c)}}$ with $z_{\{1,2,3\}} = z_{\{1\}} \cdot z_{\{2,3\}}$, $z_{\{2,3,4\}} = z_{\{2,3\}} \cdot z_{\{4\}}$, $z_{\{1,2\}} = z_{\{1\}} \cdot z_{\{2\}}$ and $z_{\{2,3\}} = z_{\{2\}} \cdot z_{\{3\}}$.
    \label{fig_linear_mccormick}
  ]{%
    \begin{minipage}{0.48\textwidth}
      \begin{center}
        \begin{tikzpicture}[
          scale=0.8,
          node/.style={draw=black, thick, rounded corners},
          arc/.style={draw=black, very thick, ->},
        ]
          \node[node, red] (123) at (-4.5,0.0) {$\{1,2,3\}$};
          \node[node, green!60!black] (234) at (-4.5,-3.0) {$\{2,3,4\}$};
          \node[node, purple] (12) at (-2.1,-0.5) {$\{1,2\}$};
          \node[node, blue] (23) at (-2.5,-1.7) {$\{2,3\}$};
          \node[node] (1) at (0,0) {$\{1\}$};
          \node[node] (2) at (0,-1) {$\{2\}$};
          \node[node] (3) at (0,-2) {$\{3\}$};
          \node[node] (4) at (0,-3) {$\{4\}$};
    
          \foreach \u/\v in {123/1, 123/23, 234/23, 234/4, 12/1, 12/2, 23/2, 23/3}{%
            \draw[arc] (\u) -- (\v);
          }
        \end{tikzpicture}
      \end{center}
    \end{minipage}
  }%
  \hfill%
  \subfloat[%
    A binary non-partitioning linearization $\D^{\text{(d)}}$ with $z_{\{1,2,3\}} = z_{\{1,2\}} \cdot z_{\{2,3\}}$, $z_{\{2,3,4\}} = z_{\{2,3\}} \cdot z_{\{4\}}$, $z_{\{1,2\}} = z_{\{1\}} \cdot z_{\{2\}}$ and $z_{\{2,3\}} = z_{\{2\}} \cdot z_{\{3\}}$.
    % Note that this implies $z_{\{1,2,3\}} = z_{\{1\}} \cdot z_{\{2\}}^2 \cdot z_{\{3\}}$, which is equal to $z_{\{1\}} \cdot z_{\{2\}} \cdot z_{\{3\}}$ since all variables are binary.
    \label{fig_linear_overlap}
  ]{%
    \begin{minipage}{0.48\textwidth}
      \begin{center}
        \begin{tikzpicture}[
          scale=0.8,
          node/.style={draw=black, thick, rounded corners},
          arc/.style={draw=black, very thick, ->},
        ]
          \node[node, red] (123) at (-4.5,0.0) {$\{1,2,3\}$};
          \node[node, green!60!black] (234) at (-4.5,-3.0) {$\{2,3,4\}$};
          \node[node, purple] (12) at (-2.1,-0.5) {$\{1,2\}$};
          \node[node, blue] (23) at (-2.5,-1.7) {$\{2,3\}$};
          \node[node] (1) at (0,0) {$\{1\}$};
          \node[node] (2) at (0,-1) {$\{2\}$};
          \node[node] (3) at (0,-2) {$\{3\}$};
          \node[node] (4) at (0,-3) {$\{4\}$};
    
          \foreach \u/\v in {123/12, 123/23, 234/23, 234/4, 12/1, 12/2, 23/2, 23/3}{%
            \draw[arc] (\u) -- (\v);
          }
        \end{tikzpicture}
      \end{center}
    \end{minipage}
  }%
  \caption{%
    Three linearizations for the minimization problem in \cref{fig_linear_problem} with the depicted hypergraph $G = (V, \E)$ with $V = \{1,2,3,4\}$ and
    $\E = \{
    \color{red}{\{1,2,3\}},
    \color{green!60!black}{\{2,3,4\}},
    \color{purple}{\{1,2\}},
    \color{blue}{\{2,3\}}
    \}$.
    The arcs that leave a node $I \subseteq V$ indicate the product that is used to represent~$z_I$. \\
    Consider the point $z^{(1)} \in \R^{\singletons cup \E}$ with $z^{(1)}_{\color{green!60!black}{\{2,3,4\}}} = 0$, $z^{(1)}_{\color{red}{\{1,2,3\}}} = z^{(1)}_{\color{purple}{\{1,2\}}} = z^{(1)}_{\{1\}} = z^{(1)}_{\{2\}} = z^{(1)}_{\{3\}} = \tfrac{1}{2}$ and $z^{(1)}_{\{4\}} = 1$.
    It is contained in $P(\D^{\text{(b)}})$.
    However, it is contained in neither $P_{\E}(\D^{\text{(c)}})$ nor in $P_{\E}(\D^{\text{(d)}})$ since in both linearizations the arc from $\color{red}{\{1,2,3\}}$ to $\color{blue}{\{2,3\}}$ implies $z^{(1)}_{\color{blue}{\{2,3\}}} \geq \tfrac{1}{2}$ and since $z^{(1)}_{\color{green!60!black}{\{2,3,4\}}} + (1 - z^{(1)}_{\color{blue}{\{2,3\}}}) + (1 - z^{(1)}_{\{4\}}) \geq 1$ implies $z^{(1)}_{\color{blue}{\{2,3\}}} \leq 0$. \\
    Consider the point $z^{(2)} \in \R^{\E \cup \singletons}$ with $z^{(2)}_{\color{red}{\{1,2,3\}}} = 0$, $z^{(2)}_{\color{green!60!black}{\{2,3,4\}}} = z^{(2)}_{\color{purple}{\{1,2\}}} = z^{(2)}_{\{2\}} = z^{(2)}_{\{3\}} = z^{(2)}_{\{4\}} = \tfrac{1}{2}$ and $z^{(2)}_{\{1\}} = 1$.
    It is contained in $P(\D^{\text{(b)}})$ and in $P_{\E}(\D^{\text{(d)}})$.
    However, it is not contained in $P_{\E}(\D^{\text{(c)}})$ since the arc from $\color{green!60!black}{\{2,3,4\}}$ to $\color{blue}{\{2,3\}}$ implies $z^{(2)}_{\color{blue}{\{2,3\}}} \geq \tfrac{1}{2}$ and since $z^{(2)}_{\color{red}{\{1,2,3\}}} + (1 - z^{(2)}_{\color{blue}{\{2,3\}}}) + (1 - z^{(2)}_{\{1\}}) \geq 1$ implies $z^{(2)}_{\color{blue}{\{2,3\}}} \leq 0$. \\
    Consider the point $z^{(3)} \in \R^{\E \cup \singletons}$ with $z^{(3)}_{\color{purple}{\{1,2\}}} = 0$ and $z^{(3)}_{\color{red}{\{1,2,3\}}} = z^{(3)}_{\color{green!60!black}{\{2,3,4\}}} = z^{(3)}_{\{1\}} = z^{(3)}_{\{2\}} = z^{(3)}_{\{3\}} = z^{(3)}_{\{4\}} = \tfrac{1}{2}$.
    It is contained in $P(\D^{\text{(b)}})$ and in $P_{\E}(\D^{\text{(c)}})$.
    However, it is not contained in $P_{\E}(\D^{\text{(d)}})$ since the arc from $\color{red}{\{1,2,3\}}$ to $\color{purple}{\{1,2\}}$ implies $\tfrac{1}{2} = z^{(3)}_{\color{red}{\{1,2,3\}}} \leq z^{(3)}_{\color{purple}{\{1,2\}}} = 0$.
  }
  \label{fig_linear}
\end{figure}

A recursive linearization states the interplay between the linearization variables.
The actual linear inequalities associated with a linearization are stated in the following definition.

\begin{definition}[relaxation, projected relaxation]
  Let $\D = (\V, \A)$ be a recursive linearization.
  Its \emph{relaxation} is the polyhedron $P(\D) \subseteq \R^{\V}$ defined by
  \begin{subequations}
    \label{model_linearization}
    \begin{alignat}{7}
      z_I &\leq z_J &\quad& \forall (I,J) \in \A \label{model_linearization_short} \\
      z_I + \sum_{J \in \outNeighbors(I)} (1-z_J) &\geq 1 &\quad& \forall I \in \V \setminus \singletons \label{model_linearization_long} \\
      z &\in [0,1]^\V. \label{model_linearization_domain}
    \end{alignat}
  \end{subequations}
  Its \emph{projected relaxation with respect to a given set $\targets \subseteq \V$} of \emph{target nodes} is the projection of $P(\D)$ onto the variables~$z_I$ for all $I \in \targets \cup \singletons$ and is denoted by $P_\targets(\D)$.
\end{definition}

Before we turn to our main result, we show that requiring a recursive linearization to be partitioning (resp.\ binary) is actually a restriction.
This requires the following simply lemma.

\begin{lemma}
  \label{thm_path_implied}
  Let $\D = (\V, \A)$ be a recursive linearization and let $I^\star,J^\star \in \V$ be two of its nodes.
  Then the inequality $z_{I^\star} \leq z_{J^\star}$ is valid for $P(\D)$ if and only if $\D$ contains a path from $I^\star$ to $J^\star$.
\end{lemma}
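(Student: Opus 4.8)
The plan is to prove the ``if'' direction by a one-line telescoping argument and the ``only if'' direction via linear programming duality, which reduces it to a flow-existence question on the (necessarily acyclic) digraph $\D$. For the ``if'' direction: if $\D$ contains a path $I^\star = K_0, K_1, \dotsc, K_t = J^\star$, then adding up the short inequalities~\eqref{model_linearization_short} for the arcs $(K_{i-1},K_i)$ gives $z_{I^\star} = z_{K_0} \le z_{K_1} \le \dotsb \le z_{K_t} = z_{J^\star}$ on all of $P(\D)$; the case $I^\star = J^\star$ needs only the empty path and is trivial.

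For the converse, assume $I^\star \ne J^\star$ and that $z_{I^\star} \le z_{J^\star}$ is valid for $P(\D)$. Since $P(\D)\ne\emptyset$ (the all-ones point satisfies~\eqref{model_linearization}), LP duality lets me write this inequality as a nonnegative combination of the inequalities in~\eqref{model_linearization} whose combined right-hand side is $\le 0$. The right-hand sides of~\eqref{model_linearization_short} and of the bounds $z_I\ge 0$ are $0$, while those of $z_I\le 1$ equal $1$ and those of~\eqref{model_linearization_long} equal $|\outNeighbors(I)|-1$; here I would use that $|\outNeighbors(I)|\ge 2$ for every $I\in\V\setminus\singletons$, since each successor $J$ is a \emph{proper} subset of $I$ (we have $I\supseteq J$ and $I\ne J$ because $\D$ is simple), so a single successor could not satisfy $\bigcup_{J\in\outNeighbors(I)}J = I$. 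Hence all of these right-hand sides are $\ge 1$, and the combination must assign coefficient $0$ to every inequality~\eqref{model_linearization_long} and to every bound $z_I\le 1$. What remains is an identity of linear forms $z_{I^\star}-z_{J^\star} = \sum_{(I,J)\in\A}\lambda_{(I,J)}(z_I-z_J) - \sum_{I\in\V}\nu_I z_I$ with $\lambda\colon\A\to\Rnonneg$ and $\nu\colon\V\to\Rnonneg$.

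Now I would compare coefficients: for each node $K$ the identity says that the net $\lambda$-outflow of $K$ (sum over outgoing arcs minus sum over incoming arcs) equals $\nu_K + \mathbbm{1}[K=I^\star] - \mathbbm{1}[K=J^\star]$. Summing over all $K\in\V$ makes the left-hand side telescope to $0$, so $\sum_{K}\nu_K = 0$ and thus $\nu\equiv 0$; hence $\lambda\ge 0$ has net outflow $+1$ at $I^\star$, $-1$ at $J^\star$, and $0$ at every other node. As every arc strictly decreases cardinality, $\D$ is acyclic. Now start at $I^\star$ and repeatedly follow an arc carrying positive $\lambda$: cardinalities strictly decrease along this walk, so no node repeats and the walk is finite; at $I^\star$ the net outflow is $1>0$, and at any later node $K\ne J^\star$ the incoming flow is positive (we just traversed a positive arc) while the net outflow is $0$, so in both cases a positive outgoing arc exists and the walk can only halt at $J^\star$. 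This traces the desired path from $I^\star$ to $J^\star$ in $\D$.

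I expect the only slightly delicate point to be the bound $|\outNeighbors(I)|\ge 2$, which is exactly what forces the ``long''-inequality and upper-bound multipliers to vanish and thereby collapses the statement to a pure flow problem; everything afterwards is routine. I would deliberately avoid proving the converse by exhibiting an explicit point of $P(\D)$ with $z_{I^\star}>z_{J^\star}$, since such a point may be forced to be fractional (setting $z_{J^\star}=0$ can propagate values $\tfrac12$ along chains of ancestors of $J^\star$), which makes a direct construction more cumbersome than the dual argument.
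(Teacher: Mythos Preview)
Your proof is correct. The ``if'' direction matches the paper exactly, and your ``only if'' direction via LP duality and flow decomposition is sound: the key observation that $|\outNeighbors(I)|\ge 2$ (which you justify correctly from simplicity of $\D$ and the covering condition) forces the multipliers on~\eqref{model_linearization_long} and on the upper bounds to vanish, after which the coefficient comparison yields a unit $I^\star$--$J^\star$ flow on an acyclic digraph, and the greedy walk extracts a path.

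The paper, however, takes precisely the route you chose to avoid: it exhibits the explicit point $z_I \coloneqq 0$ if $\D$ has a path from $I$ to $J^\star$ and $z_I \coloneqq \tfrac12$ otherwise, and checks membership in $P(\D)$ directly. The verification is short: \eqref{model_linearization_short} holds because the $0$-set is closed under predecessors, and \eqref{model_linearization_long} holds because each $1-z_J \ge \tfrac12$ and there are at least two successors (the same $|\outNeighbors(I)|\ge 2$ fact you use). So your worry that ``such a point may be forced to be fractional'' and therefore ``cumbersome'' is unfounded---the fractional value $\tfrac12$ is uniform and the check is a two-line case analysis. Your dual argument is longer but has the merit of being mechanical and of making the role of the $|\outNeighbors(I)|\ge 2$ bound equally transparent (it is exactly what kills the long-inequality multipliers, just as in the primal proof it is exactly what makes the long inequalities hold at the $\tfrac12$-point).
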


\begin{proof}
  Sufficiency follows by considering a path from $I^\star$ to $J^\star$ and adding inequalities~\eqref{model_linearization_short} for all its arcs.
  To see necessity, assume that $\D$ does not contain such a path.
  We define the vector $z \in \R^{\V}$ with
  \begin{gather*}
    z_I \coloneqq \begin{cases}
      0 & \text{if there exists a path from $I$ to $J^\star$,} \\
      \frac{1}{2} & \text{otherwise},
    \end{cases}
  \end{gather*}
  and claim that $z \in P(\D)$ holds.
  Inequality~\eqref{model_linearization_short} for an arc $(I,J) \in \A$ is clearly satisfied if $z_J = \frac{1}{2}$ holds.
  Otherwise, there exists a path from $J$ to $J^\star$ in $\D$, which can be extended to a path from $I$ to $J^\star$ using arc $(I,J)$.
  This shows that in this case $z_I = 0$ holds, and thus that the inequality is also satisfied.
  Similarly, inequality~\eqref{model_linearization_long} for a node $I$ is clearly satisfied since the definition of $z$ yields $1-z_J \geq 0.5$ for each successor $J \in \outNeighbors(I)$, and since $I$ has $|\outNeighbors(I)| \geq 2$ such successors.
  From $z \in P(\D)$ and $\frac{1}{2} = z_{I^\star} \not\leq z_{J^\star} = 0$ we conclude that $z_{I^\star} \leq z_{J^\star}$ is not valid for $P(\D)$.
\end{proof}

\begin{figure}[htpb]
  \subfloat[%
    A non-partitioning recursive McCormick linearization whose relaxation is not dominated by any partitioning recursive linearization.
    \label{fig_restriction_partitioning}
  ]{%
    \begin{minipage}{0.36\textwidth}
      \begin{center}
        \begin{tikzpicture}[
            scale=0.8,
              node/.style={draw=black, thick, rounded corners},
              arc/.style={draw=black, very thick, ->},
            ]
              \node[node, red] (123) at (-4.0,-1.0) {$\{1,2,3\}$};
              \node[node, green!60!black] (12) at (-2.0,-0.0) {$\{1,2\}$};
              \node[node, blue] (23) at (-2.0,-2.0) {$\{2,3\}$};
              \node[node] (1) at (0,-0) {$\{1\}$};
              \node[node] (2) at (0,-1) {$\{2\}$};
              \node[node] (3) at (0,-2) {$\{3\}$};
        
              \foreach \u/\v in {123/12, 123/23, 12/1, 12/2, 23/2, 23/3}{%
                \draw[arc] (\u) -- (\v);
              }
        \end{tikzpicture}
      \end{center}
    \end{minipage}
  }%
  \hfill
  \subfloat[%
    A non-binary recursive McCormick linearization whose relaxation is not dominated by any binary recursive linearization.
    \label{fig_restriction_binary}
  ]{%
    \begin{minipage}{0.60\textwidth}
      \begin{center}
        \begin{tikzpicture}[
            scale=0.8,
              node/.style={draw=black, thick, rounded corners},
              arc/.style={draw=black, very thick, ->},
            ]
              \node[node, red] (123456) at (-5.0,-2.5) {$\{1,2,3,4,5,6\}$};
              \node[node, green!60!black] (12) at (-2.0,-0.5) {$\{1,2\}$};
              \node[node, green!60!black] (34) at (-2.0,-2.5) {$\{3,4\}$};
              \node[node, green!60!black] (56) at (-2.0,-4.5) {$\{5,6\}$};
              \node[node, blue] (1234) at (5.1,-0.0) {$\{1,2,3,4\}$};
              \node[node, blue] (1256) at (5.1,-2.5) {$\{1,2,5,6\}$};
              \node[node, blue] (3456) at (5.1,-5.0) {$\{3,4,5,6\}$};
              \node[node, purple] (13) at (2.5,+0.5) {$\{1,3\}$};
              \node[node, purple] (24) at (2.5,-0.7) {$\{2,4\}$};
              \node[node, purple] (15) at (2.5,-1.9) {$\{1,5\}$};
              \node[node, purple] (26) at (2.5,-3.1) {$\{2,6\}$};
              \node[node, purple] (35) at (2.5,-4.3) {$\{3,5\}$};
              \node[node, purple] (46) at (2.5,-5.5) {$\{4,6\}$};
              \node[node] (1) at (0,+0.5) {$\{1\}$};
              \node[node] (2) at (0,-0.7) {$\{2\}$};
              \node[node] (3) at (0,-1.9) {$\{3\}$};
              \node[node] (4) at (0,-3.1) {$\{4\}$};
              \node[node] (5) at (0,-4.3) {$\{5\}$};
              \node[node] (6) at (0,-5.5) {$\{6\}$};
        
              \foreach \u/\v in {123456/12, 123456/34, 123456/56, 12/1, 12/2, 34/3, 34/4, 56/5, 56/6, 1234/13, 1234/24, 1256/15, 1256/26, 3456/35, 3456/46, 13/1, 13/3, 24/2, 24/4, 15/1, 15/5, 26/2, 26/6, 35/3, 35/5, 46/4, 46/6}{%
                \draw[arc] (\u) -- (\v);
              }
        \end{tikzpicture}
      \end{center}
    \end{minipage}
  }%
  \caption{Recursive McCormick linearizations showing that being partitioning or binary is a restriction.}
  \label{fig_restriction}
\end{figure}

\begin{proposition}
  Let $\D^{\text{\eqref{fig_restriction_partitioning}}} = (\V^{\text{\eqref{fig_restriction_partitioning}}}, \A^{\text{\eqref{fig_restriction_partitioning}}})$ be the recursive non-partitioning linearization (for hypergraph $G = (V,\E)$) from \cref{fig_restriction_partitioning} and let $\D = (\V, \A)$ be any recursive linearization with $P_{\E}(\D) \subseteq P(\D^{\text{\eqref{fig_restriction_partitioning}}})$.
  Then $\D$ is also non-partitioning.
\end{proposition}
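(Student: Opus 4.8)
The content of the statement is that the two arcs leaving the node $\{1,2,3\}$ in the linearization of \cref{fig_restriction_partitioning} cannot be reproduced by any partitioning linearization. The first thing I would do is pin down the common ambient space: for the inclusion $P_{\E}(\D)\subseteq P(\D^{\text{\eqref{fig_restriction_partitioning}}})$ to be meaningful, both polyhedra must live in $\R^{\E\cup\singletons}$, so here $V=\{1,2,3\}$ and $\E=\{\{1,2,3\},\{1,2\},\{2,3\}\}$, i.e.\ $\E\cup\singletons$ is exactly the node set of $\D^{\text{\eqref{fig_restriction_partitioning}}}$.

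Next I would extract two valid inequalities from the hypothesis. Among the defining inequalities~\eqref{model_linearization_short} of $P(\D^{\text{\eqref{fig_restriction_partitioning}}})$ are $z_{\{1,2,3\}}\le z_{\{1,2\}}$ and $z_{\{1,2,3\}}\le z_{\{2,3\}}$, one for each arc out of $\{1,2,3\}$. Since $P_{\E}(\D)\subseteq P(\D^{\text{\eqref{fig_restriction_partitioning}}})$, both inequalities are valid for $P_{\E}(\D)$; as each uses only coordinates indexed by $\E\cup\singletons$, each is then also valid for $P(\D)$ (a linear inequality in the projection variables is valid for the projection exactly when it is valid for the original polyhedron). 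By the necessity direction of \cref{thm_path_implied}, $\D$ therefore contains a path from $\{1,2,3\}$ to $\{1,2\}$ and a path from $\{1,2,3\}$ to $\{2,3\}$.

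Finally I would turn these paths into arcs. Every arc $(I,J)\in\A$ satisfies $I\supseteq J$, so along any path the sets only shrink. Let $J$ be the second node on the path from $\{1,2,3\}$ to $\{1,2\}$: it is a successor of $\{1,2,3\}$, hence $J\subseteq\{1,2,3\}$, while the remainder of the path yields $J\supseteq\{1,2\}$; thus $J\in\{\{1,2\},\{1,2,3\}\}$, and since the nodes of a path are distinct, $J=\{1,2\}$. Hence $\{1,2\}\in\outNeighbors(\{1,2,3\})$ in $\D$, and symmetrically $\{2,3\}\in\outNeighbors(\{1,2,3\})$. These are two distinct successors of $\{1,2,3\}$ with $\{1,2\}\cap\{2,3\}=\{2\}\neq\emptyset$, so $\D$ is not partitioning, which is what we wanted.

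I do not expect a real obstacle here. The only mildly delicate points are making the common ambient space explicit (so that the hypothesis typechecks at all) and the short observation that a path from $\{1,2,3\}$ to $\{1,2\}$ must begin with the arc $(\{1,2,3\},\{1,2\})$, which is forced because no set lies strictly between $\{1,2\}$ and $\{1,2,3\}$ and arcs only shrink sets; after that the argument is a direct application of \cref{thm_path_implied}.
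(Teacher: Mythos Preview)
Your proposal is correct and follows essentially the same approach as the paper: extract the two inequalities $z_{\{1,2,3\}}\le z_{\{1,2\}}$ and $z_{\{1,2,3\}}\le z_{\{2,3\}}$, apply \cref{thm_path_implied} to obtain paths in $\D$, and observe that by cardinality these paths must be single arcs, forcing two overlapping successors of $\{1,2,3\}$. You are just a bit more explicit than the paper about the ambient space and about why the paths collapse to arcs, which is fine.
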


\begin{proof}
  Inequalities $z_{\color{red}{\{1,2,3\}}} \leq z_{\color{green!60!black}{\{1,2\}}}$ and $z_{\color{red}{\{1,2,3\}}} \leq z_{\color{blue}{\{2,3\}}}$ are valid for $P(\D^{\text{\eqref{fig_restriction_partitioning}}})$.
  By \cref{thm_path_implied}, this implies that $\D$ must contain a path from $\color{red}{\{1,2,3\}}$ to $\color{green!60!black}{\{1,2\}}$ and one from $\color{red}{\{1,2,3\}}$ to $\color{blue}{\{2,3\}}$.
  Due to the cardinalities of the involved sets these paths must actually be arcs, which implies ${\color{green!60!black}\{1,2\}}, {\color{blue}\{2,3\}} \in \outNeighbors({\color{red}\{1,2,3\}})$.
  We conclude that $\D$ is not partitioning.
\end{proof}

\begin{proposition}
  Let $\D^{\text{\eqref{fig_restriction_binary}}} = (\V^{\text{\eqref{fig_restriction_binary}}}, \A^{\text{\eqref{fig_restriction_binary}}})$ be the recursive non-partitioning linearization (for hypergraph $G = (V,\E)$) from \cref{fig_restriction_binary} and let $\D = (\V, \A)$ be any recursive linearization with $P_\E(\D) \subseteq P(\D^{\text{\eqref{fig_restriction_binary}}})$.
  Then $\D$ is also non-binary.
\end{proposition}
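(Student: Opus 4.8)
Throughout, write $\D^\star \coloneqq \D^{\text{\eqref{fig_restriction_binary}}}$ and $R \coloneqq \{1,2,3,4,5,6\}$ for its unique $6$-element node, and note that, $\D$ being a recursive linearization of the same hypergraph $G$, every non-singleton node of $\D^\star$ lies in $\E\subseteq\V$. I would argue by contradiction: suppose $\D$ is binary. Each inequality~\eqref{model_linearization_short} defining $P(\D^\star)$ is valid for $P(\D^\star)\supseteq P_\E(\D)$, hence for $P(\D)$ (it only involves projected coordinates), so by \cref{thm_path_implied} the digraph $\D$ contains, for every arc $(I,J)$ of $\D^\star$, a directed path from $I$ to $J$; in particular from $R$ to each of $\{1,2\},\{3,4\},\{5,6\}$, and from each blue hyperedge $\{1,2,3,4\},\{1,2,5,6\},\{3,4,5,6\}$ to the two purple sets drawn below it in \cref{fig_restriction_binary} (e.g.\ from $\{3,4,5,6\}$ to $\{3,5\}$ and to $\{4,6\}$). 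If $\D$ misses even one of these paths, then, by \cref{thm_path_implied} once more, the corresponding inequality~\eqref{model_linearization_short} of $P(\D^\star)$ is violated by a point of $P(\D)$, contradicting $P_\E(\D)\subseteq P(\D^\star)$; so I may assume $\D$ contains all of them.

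Next I would produce an explicit point $z^\star\in P(\D)\setminus P(\D^\star)$. Fix a small $\varepsilon>0$ and assign weights $w_I$ for $I\in\V$ by $w_I\coloneqq 1$ for every singleton and for each of $\{1,2\},\{3,4\},\{5,6\}$, and $w_I\coloneqq\sum_{J\in\outNeighbors(I)}w_J$ for every other non-singleton $I$ (well defined since $\D$ is acyclic); set $z^\star_I\coloneqq 1-\varepsilon\,w_I$. Membership $z^\star\in P(\D)$ is routine: for an arc $(I,J)$ one has $w_I\ge w_J$, so~\eqref{model_linearization_short} holds; for a non-singleton $I$ the left-hand side of~\eqref{model_linearization_long} equals $1-\varepsilon w_I+\varepsilon\sum_{J\in\outNeighbors(I)}w_J$, which equals $1$ unless $I\in\{\{1,2\},\{3,4\},\{5,6\}\}$ (where it equals $1+\varepsilon$); and $z^\star\in[0,1]^\V$ once $\varepsilon\le 1/\max_{I\in\V}w_I$. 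Since $z^\star_{\{1,2\}}=z^\star_{\{3,4\}}=z^\star_{\{5,6\}}=1-\varepsilon$,
\begin{equation*}
  z^\star_R+(1-z^\star_{\{1,2\}})+(1-z^\star_{\{3,4\}})+(1-z^\star_{\{5,6\}})=(1-\varepsilon w_R)+3\varepsilon ,
\end{equation*}
whose left-hand side is exactly inequality~\eqref{model_linearization_long} written for the node $R$ of $\D^\star$, hence valid for $P(\D^\star)$; so once I show $w_R\ge 4$ the above is $<1$ and $z^\star$ contradicts $P_\E(\D)\subseteq P(\D^\star)$.

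To see $w_R\ge 4$: unrolling the definition, $w_R$ counts, with multiplicity, the paths from $R$ in the arborescence-like structure obtained by stopping at singletons and at $\{1,2\},\{3,4\},\{5,6\}$; since $R$ reaches the three distinct such ``leaves'' $\{1,2\},\{3,4\},\{5,6\}$ and a path to one of them cannot pass through another, $w_R\ge 3$. Suppose $w_R=3$. Because $\D$ is binary, $R$ has two successors $A,B$ with $A\cup B=R$ and $w_A+w_B=3$, so $\{w_A,w_B\}=\{1,2\}$, say $w_A=1$. A node of weight $1$ is a singleton or one of the three pairs; it cannot be a singleton $\{v\}$ (then $B=R\setminus\{v\}$ has five elements, but $w_B=2$ ``leaves'' cover at most four), so $A$ is a pair, say $A=\{1,2\}$, forcing $B=\{3,4,5,6\}$. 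Then $w_B=2$ forces the two successors of $\{3,4,5,6\}$ to be weight‑$1$ sets with union $\{3,4,5,6\}$, i.e.\ the pairs $\{3,4\}$ and $\{5,6\}$ — but then $\D$ contains no directed path from $\{3,4,5,6\}$ to $\{3,5\}$, contradicting the first paragraph; the two other choices of $A$ are symmetric. Hence $w_R\ge 4$.

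The main obstacle is this last step: one has to pin down exactly how a binary $\D$ can branch at $R$ (and then at $B$) so that $w_R=3$, and observe that each such branching necessarily destroys one of the required blue‑to‑purple paths. The construction of $z^\star$ and the verification that $z^\star\in P(\D)$ are the easy parts.
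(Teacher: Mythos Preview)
Your proof is correct, but it takes a substantially different route from the paper's.

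The paper argues purely structurally: having established via \cref{thm_path_implied} the required paths, it first observes that the paths from $\{1,2,3,4\}$ to $\{1,3\}$ and to $\{2,4\}$ must be single arcs (cardinality), so binariness pins down $\outNeighbors(\{1,2,3,4\})=\{\{1,3\},\{2,4\}\}$.  Then, since $R$ has only two successors, two of the three paths from $R$ to $\{1,2\},\{3,4\},\{5,6\}$ share their second node; by symmetry this node contains $\{1,2,3,4\}$, and the paper concludes it equals $\{1,2,3,4\}$, which is incompatible with $\outNeighbors(\{1,2,3,4\})=\{\{1,3\},\{2,4\}\}$.  Your approach instead builds an explicit fractional point $z^\star=1-\varepsilon w$ from a recursively defined weight function, reducing the contradiction to the inequality $w_R\ge4$, which you then settle by a short case analysis on the (binary) successors of $R$.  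The endgame of both arguments is the same observation --- that any binary decomposition of one of the ``blue'' $4$-sets into green pairs destroys a required path to a purple pair --- but you reach it through an LP witness rather than a direct pigeonhole.

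What each buys: the paper's argument is shorter and avoids the auxiliary construction of $z^\star$ and $w$; your argument is more self-contained and, as written, is actually more careful on one point --- the paper's line ``Hence, this common node must be $\{1,2,3,4\}$'' glosses over the case where the shared second node has five elements, whereas your covering bound $|B_1|+|B_2|\le4$ handles that cleanly.  A minor stylistic remark: your claim ``forcing $B=\{3,4,5,6\}$'' really uses $w_B=2\Rightarrow|B|\le4$ (the same covering bound), which you only spell out in the singleton case; it would read better to state that bound once and reuse it.
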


\begin{proof}
  We assume, for the sake of contradiction, that $\D$ is binary.

  First, observe that $z_{{\color{blue}\{1,2,3,4\}}} \leq z_{{\color{purple}\{1,3\}}}$ and $z_{{\color{blue}\{1,2,3,4\}}} \leq z_{{\color{purple}\{2,4\}}}$ are valid for $P(\D^{\text{\eqref{fig_restriction_binary}}})$.
  By \cref{thm_path_implied}, this implies that $\D$ must contain paths from ${\color{blue}\{1,2,3,4\}}$ to ${\color{purple}\{1,3\}}$ and to ${\color{purple}\{2,4\}}$.
  Due to the cardinalities of the involved sets, these paths must actually be arcs, which implies $\outNeighbors({\color{blue}\{1,2,3,4\}}) = \{ {\color{purple}\{1,3\}}, {\color{purple}\{2,4\}} \}$ since $\D$ is binary.

  Second, inequalities $z_{{\color{red}\{1,2,3,4,5,6\}}} \leq z_{{\color{green!60!black}\{1,2\}}}$, $z_{{\color{red}\{1,2,3,4,5,6\}}} \leq z_{{\color{green!60!black}\{3,4\}}}$ and $z_{{\color{red}\{1,2,3,4,5,6\}}} \leq z_{{\color{green!60!black}\{5,6\}}}$ are valid for $P(\D^{\text{\eqref{fig_restriction_binary}}})$.
  Again by \cref{thm_path_implied}, this implies that $\D$ must contain paths from ${\color{red}\{1,2,3,4,5,6\}}$ to ${\color{green!60!black}\{1,2\}}$, ${\color{green!60!black}\{3,4\}}$ and to ${\color{green!60!black}\{5,6\}}$.
  Since $\D$ is binary, two such paths must have a common second node (after ${\color{red}\{1,2,3,4,5,6\}}$).
  By symmetry we can assume that these are the paths to ${\color{green!60!black}\{1,2\}}$ and to ${\color{green!60!black}\{3,4\}}$.
  Hence, this common node must be ${\color{blue}\{1,2,3,4\}}$.
  Due to $\outNeighbors({\color{blue}\{1,2,3,4\}}) = \{ {\color{purple}\{1,3\}}, {\color{purple}\{2,4\}} \}$, this contradicts the presence of the three paths from ${\color{red}\{1,2,3,4,5,6\}}$ to ${\color{green!60!black}\{1,2\}}$, ${\color{green!60!black}\{3,4\}}$ and to ${\color{green!60!black}\{5,6\}}$.
  We conclude that $\D$ is not binary.
\end{proof}

This shows that in principle a single non-binary or non-partitining linearization may be more powerful than any single recursive McCormick linearization.
However, in the next section we show that combinations of several McCormick linearizations are as powerful as the general recursive linearization.

\section{Comparison of projected relaxations}
\label{sec_equivalent}

\begin{theorem}
  \label{thm_equivalent}
  Let $G = (V,\E)$ be a hypergraph.
  Then the following polyhedra are equal:
  \begin{enumerate}[label=(\roman*)]
  \item
    \label{thm_equivalent_flower}
    flower relaxation $\flowerRel(G)$;
  \item
    \label{thm_equivalent_all}
    intersection of the projected relaxations $P_\E(\D)$ for all recursive linearizations~$\D$ of~$G$;
  \item
    \label{thm_equivalent_mccormick}
    intersection of the projected relaxations $P_{\E}(\D)$ for all recursive McCormick linearizations~$\D$ of $G$.
  \end{enumerate}
\end{theorem}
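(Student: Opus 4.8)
The plan is to prove the cyclic chain of inclusions (i) $\subseteq$ (ii) $\subseteq$ (iii) $\subseteq$ (i). The inclusion (ii) $\subseteq$ (iii) needs no argument, since a recursive McCormick linearization of $G$ is in particular a recursive linearization of $G$: the intersection in (iii) is thus over a subfamily of the linearizations used in (ii), and intersecting over fewer polyhedra can only enlarge the result. It remains to prove (i) $\subseteq$ (ii), which strengthens Khajavirad's theorem from recursive McCormick linearizations to arbitrary recursive linearizations, and the converse (iii) $\subseteq$ (i). Throughout I use that a point of $\flowerRel(G)$ satisfies, besides the box constraints and all extended flower inequalities, also the short inequalities~\eqref{model_standard_short} of the standard relaxation, i.e.\ $z_I \le z_{\{v\}}$ for all $v \in I \in \E$.

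For (i) $\subseteq$ (ii) fix $z^\star \in \flowerRel(G)$ and a recursive linearization $\D = (\V, \A)$ of $G$; I must lift $z^\star$ to a point $\hat{z} \in P(\D)$ that agrees with $z^\star$ on all coordinates indexed by $\E \cup \singletons$. Call a finite family $\mathcal{C} \subseteq \E \cup \singletons$ an \emph{$I$-cover} if $\bigcup_{K \in \mathcal{C}} K \supseteq I$ and $K \cap I \neq \emptyset$ for each $K \in \mathcal{C}$; such families always exist (e.g.\ $\{ \{v\} \mid v \in I \}$). I would set
\[
  \hat{z}_I \coloneqq \max\Bigl\{\, 0,\ \ 1 - \min_{\mathcal{C}} \sum_{K \in \mathcal{C}} (1 - z^\star_K) \,\Bigr\} \qquad \text{for all } I \in \V,
\]
where $\mathcal{C}$ ranges over all $I$-covers; informally, $\hat{z}_I$ is the largest value that the extended flower inequalities ``centered at $I$'' permit. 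For $I \in \E$ the cover $\{I\}$ yields $\hat{z}_I \le z^\star_I$, while the extended flower inequality centered at $I$ with neighbors $\mathcal{C}$ yields $\hat{z}_I \ge z^\star_I$; for a singleton $\{v\}$ the short inequalities show that $\{ \{v\} \}$ is an optimal cover; hence $\hat{z}$ agrees with $z^\star$ on $\E \cup \singletons$. It then remains to verify $\hat{z} \in P(\D)$. The box constraints~\eqref{model_linearization_domain} are immediate. For an arc $(I, J) \in \A$, deleting from an optimal $I$-cover all sets disjoint from $J$ gives a $J$-cover of no larger weight, so $\hat{z}_I \le \hat{z}_J$, which is~\eqref{model_linearization_short}. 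For $I \in \V \setminus \singletons$, the union of optimal $J$-covers over all $J \in \outNeighbors(I)$ is an $I$-cover (using $\bigcup_{J \in \outNeighbors(I)} J = I$), and, distinguishing whether the clamping at $0$ is active at $I$ or at one of its successors, this yields~\eqref{model_linearization_long}. Thus $\hat{z} \in P(\D)$ projects to $z^\star$, so $z^\star \in P_\E(\D)$; as $\D$ was arbitrary, (i) $\subseteq$ (ii).

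For (iii) $\subseteq$ (i) it suffices to show that each inequality in a linear description of $\flowerRel(G)$ is valid for $P_\E(\D)$ for some recursive McCormick linearization $\D$ of $G$. The box constraints are valid for every $P_\E(\D)$, and for $v \in I \in \E$ the short inequality $z_I \le z_{\{v\}}$ is the arc inequality~\eqref{model_linearization_short} in any $\D$ that linearizes $z_I$ by a split having $\{v\}$ as a successor. Now consider an extended flower inequality centered at $I$ with neighbors $J_1, \dots, J_k$; we may assume $J_i \neq I$ for all $i$, since otherwise the inequality follows from $1 - z_{J_i} \ge 0$. Put $A_i \coloneqq J_i \cap I$ and $B_i \coloneqq A_i \setminus (A_1 \cup \dots \cup A_{i-1})$, so the nonempty $B_i$ partition $I$; list them as $B'_1, \dots, B'_r$ with $B'_j \subseteq J_{i_j}$. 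I would build $\D$ so that $z_I$ is linearized by a ``caterpillar'' over the pieces $B'_1, \dots, B'_r$ when $r \ge 2$ (when $r = 1$ one has $J_{i_1} \supsetneq I$, and one instead makes $I$ a successor of $J_{i_1}$); so that $z_{J_{i_j}}$ is linearized as $z_{B'_j} \cdot z_{J_{i_j} \setminus B'_j}$ whenever $B'_j \subsetneq J_{i_j}$; and so that every remaining non-singleton node, in particular every other hyperedge of $\E$, is linearized by an arbitrary binary partitioning split. One checks that these prescriptions are mutually consistent and produce a recursive McCormick linearization of $G$ (every auxiliary node introduced is a successor of some node, so all sources lie in $\E \cup \singletons$). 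In this $\D$ there is a path from $I$ to each $B'_j$, so summing inequalities~\eqref{model_linearization_long} along the caterpillar gives $z_I + \sum_{j=1}^{r} (1 - z_{B'_j}) \ge 1$ on $P(\D)$; and there is a path from $J_{i_j}$ to $B'_j$, so $z_{J_{i_j}} \le z_{B'_j}$ on $P(\D)$ by \cref{thm_path_implied}, whence $1 - z_{B'_j} \le 1 - z_{J_{i_j}}$. Together with $1 - z_{J_i} \ge 0$ for the indices $i$ with $B_i = \emptyset$, this gives $z_I + \sum_{i=1}^{k} (1 - z_{J_i}) \ge 1$ on $P(\D)$, hence on $P_\E(\D)$. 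Therefore (iii) $\subseteq \flowerRel(G)$.

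The step I expect to be the main obstacle is the construction in the previous paragraph: one must check that the locally prescribed splits assemble into a single globally valid recursive McCormick linearization of the whole hypergraph --- binary and partitioning at every node, covering every hyperedge, with all sources in $\E \cup \singletons$ and with no node assigned two conflicting splits (for which one rules out, e.g., a neighbor equal to $I$, or two pieces $B'_j$ arising from the same neighbor). The degenerate case $r = 1$ and repeated neighbors also need separate attention. By comparison, once the ``maximal flower-permitted value'' formula for $\hat{z}$ is found, (i) $\subseteq$ (ii) is a chain of routine checks.
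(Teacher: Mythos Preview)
Your proof is correct, and the direction \ref{thm_equivalent_mccormick} $\subseteq$ \ref{thm_equivalent_flower} is essentially identical to the paper's: your sets $B_i$ coincide with the paper's $L_i$, and your ``caterpillar'' over the nonempty $B'_j$ together with the splits $J_{i_j} \to (B'_j,\, J_{i_j}\setminus B'_j)$ is exactly the construction in the paper (cf.\ \cref{fig_construction}). The only cosmetic difference is that the paper first reduces to a non-redundant flower (so every $L_i$ is nonempty) whereas you keep the empty $B_i$ around and absorb them via $1-z_{J_i}\ge 0$; your explicit handling of the degenerate case $r=1$ is also something the paper's non-redundancy assumption sidesteps. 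Your closing caveat about checking that the local splits assemble into a globally consistent McCormick linearization is apt; the paper makes the same leap with ``it is easily verified''.

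Where you genuinely diverge is in \ref{thm_equivalent_flower} $\subseteq$ \ref{thm_equivalent_all}. The paper argues indirectly: it enlarges $G$ to $G'=(V,\V\setminus\singletons)$, observes that every defining inequality of $P(\D)$ is an extended flower inequality of $G'$, and then invokes \cref{thm_flower_project} to project $\flowerRel(G')$ back down to $\flowerRel(G)$. You instead produce an explicit lift $\hat z_I = \max\{0,\,1-\min_{\mathcal C}\sum_{K\in\mathcal C}(1-z^\star_K)\}$ and verify~\eqref{model_linearization} directly. Your route is self-contained and bypasses \cref{thm_flower_project} entirely; the paper's route is shorter once that lemma is available and makes the structural reason (closure of the flower relaxation under projection) more visible. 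Both arguments rest on the short inequalities $z_I\le z_{\{v\}}$ being valid for $\flowerRel(G)$ --- you state this assumption explicitly, while the paper uses it implicitly when treating arcs $(I,\{v\})$ as extended flower inequalities ``centered at $\{v\}$''.
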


The fact that~\ref{thm_equivalent_flower} is contained in~\ref{thm_equivalent_mccormick} was established in~\cite{Khajavirad23}.
This was proved by applying Fourier-Motzkin elimination to the relaxation of a McCormick linearization, showing the all resulting inequalities are implied by extended flower inequalities.
In contrast to this, our proof works via the projection of the extended flower relaxation and is surprisingly simple.

\begin{proof}[Proof that \ref{thm_equivalent_flower} is contained in \ref{thm_equivalent_all}]
  Let $\D = (\V,\A)$ be a recursive linearization of a hypergraph $G = (V,\E)$.
  We need to show $\flowerRel(G) \subseteq P_{\E}(\D)$.
  To this end, we consider the hypergraph $G' = (V, \E')$ with $\E' \coloneqq \V \setminus \singletons$ and claim that $\flowerRel(G') \subseteq P(\D)$ holds.
  Note that $\E' \supseteq \E$ holds, that is, $G'$ contains all of $G$'s hyperedges.

  First, for any arc $(I,J) \in \A$, \eqref{model_linearization_short} is equivalent to the extended flower inequality~\eqref{eq_flower} centered at~$J$ with neighbor~$I$.
  Second, for any node $I \in \V \setminus \singletons$, \eqref{model_linearization_long} is equivalent to the extended flower inequality~\eqref{eq_flower} centered at~$I$ with neighbors $\outNeighbors(I)$.
  Third, for any $v \in \V$, also $0 \leq z_v \leq 1$ follows from the definition of $\flowerRel(G')$.
  This proves $\flowerRel(G') \subseteq P(\D)$.

  For the projection~$P$ of $\flowerRel(G')$ and for that of $P(\D)$, both onto the variables~$z_I$ for $I \in \E \cup \singletons$, we thus obtain $P \subseteq P_{\E}(\D)$.
  Successive application of \cref{thm_flower_project} to $P(\D)$ for every edge from $\E' \setminus \E$ yields $P = \flowerRel(G)$.
  Hence, $\flowerRel(G) \subseteq P_{\E}(\D)$ holds.
  Since $\flowerRel(G)$ is contained in each such projected relaxation, it is also contained in their intersection, which concludes the proof.
\end{proof}

\begin{proof}[Proof that \ref{thm_equivalent_all} is contained in \ref{thm_equivalent_mccormick}]
This holds since every recursive McCormick linearization is a recursive linearization.
\end{proof}

\DeclareDocumentCommand\U{}{\mathcal{U}}

\begin{proof}[Proof that \ref{thm_equivalent_mccormick} is contained in \ref{thm_equivalent_flower}]
It suffices to show that for any hypergraph $G = (V,\E)$ and any non-redundant extended flower inequality~\eqref{eq_flower} there exists a recursive binary partitioning linearization $\D = (\V, \A)$ whose projected relaxation with respect to~$\E$ implies this inequality.

To this end, consider an inequality~\eqref{eq_flower} centered at $I^\star$ with neighbors $J^\star_1, J^\star_2, \dotsc, J^\star_k$. 
Suppose there exists a neighbor $J^\star_{i^\star}$ that is not required for all (other) neighbors to cover $I^\star$.
Now consider the extended flower inequality
\begin{gather*}
  z_{I^\star} + (1-z_{J^\star_1}) + (1-z_{J^\star_2}) + \dotsb + (1-z_{J^\star_{i^\star-1}}) + (1-z_{J^\star_{i^\star+1}}) + \dotsb + (1-z_{J^\star_{k-1}}) + (1-z_{J^\star_k}) \geq 1
\end{gather*}
in which neighbor $J^\star_{i^\star}$ was removed.
Adding $1-z_{J^\star_{i^\star}} \geq 0$ to it yields our considered inequality, which shows that the latter is redundant.
Hence, from now on we assume that
\begin{gather}
  \text{for each $i \in \{1,2,\dotsc,k\}$ there exists a node $v \in I^\star \cap J^\star_i$ that belongs to no other neighbor.} \label{eq_invariant}
\end{gather}
We partition $I^\star$ into the sets
\begin{gather}
  L_i \coloneqq I^\star \cap \left(J^\star_i \setminus \bigcup_{j=1}^{i-1} J^\star_j \right) \quad i=1,2,\dotsc,k. \label{eq_construction_partition}
\end{gather}
Note that $L_{i^\star} \neq \emptyset$ holds by~\eqref{eq_invariant}.
We construct our linearization $\D = (\V,\A)$ as follows.
First, start with $\V \coloneqq \{ I^\star, L_1, L_2, \dotsc, L_k \} \cup \singletons$.
Second, for $i=k,k-1,\dotsc,3,2$, add the node $L_1 \cup L_2 \cup \dotsb \cup L_{i-1}$ to~$\V$ and the arcs from $L_1 \cup L_2 \cup \dotsb \cup L_i$ to $L_1 \cup L_2 \cup \dotsb \cup L_{i-1}$ and to~$L_i$.
Note that this yields a binary tree with root $I^\star$ and leaves~$L_i$.
We will later add paths from these leaves to the singletons, and for this purpose initialize the set~$\U$ of unprocessed nodes as $\U \coloneqq \{L_1, L_2, \dotsc, L_k\}$.

We now extend~$\D$ to also include the neighbors $J^\star_i$, processing all $i \in \{1,2,\dotsc,k\}$ one by one.
Throughout this process all nodes $I \in \V$ will be subsets of $I^\star$ or of those $J^\star_i$ that were processed so far.
If $J^\star_i = L_i$, then there is nothing to do and we continue with the next iteration $i+1$.
Otherwise, \eqref{eq_invariant} implies $J^\star_i \notin \V$.
Hence, we add this node $J^\star_i$ and the node $J^\star_i \setminus L_i$ (unless it exists) to $\V$, add the arcs $(J^\star_i, L_i), (J^\star_i, J^\star_i \setminus L_i)$ to $\A$, and add $J^\star_i \setminus L_i$ to~$\U$.
An example of such an incomplete linearization is depicted in \cref{fig_construction}.

\begin{figure}[htpb]
  \subfloat[%
    Support hypergraph with a center edge $\textcolor{red}{I^\star}$ and four neighbors $\textcolor{orange}{J^\star_1}$, $\textcolor{blue}{J^\star_2}$, $\textcolor{green!60!black}{J^\star_3}$ and $\textcolor{purple}{J^\star_4}$ along with the four sets $L_i$ as defined in~\eqref{eq_construction_partition}.
    \label{fig_construction_hypergraph}
  ]{%
    \begin{minipage}{0.335\textwidth}
      \begin{center}
        \begin{tikzpicture}[
            vertex/.style={draw=black, very thick},
          ]
          \foreach \i/\x/\y in {6/0/0, 3/-1.3/0, 9/-1.4/-1, 10/-1.4/-1.6, 7/0.75/-1, 8/0.75/-1.6, 4/0.1/1, 5/0.7/1, 1/-1.7/1, 2/-1.1/1,%
                                11/0.0/2.0, 12/0.8/2.0, 13/0.4/-2.6, 14/-2.6/-1, 15/-2.6/-1.6}{%
            \node[vertex] (\i) at (\x,\y) {$\i$};
          }
          \draw[rounded corners, red, ultra thick, dashed] (-2.2,-2.2) rectangle (1.4,1.6);
          \draw[red, thick, dashed] (-2.2,1.2) to +(-0.6,0.4) node[anchor=south] {$I^\star$};
          \draw[orange, very thick, densely dotted] (-0.8,-0.3) rectangle (-2.0,1.4);
          \draw[orange, thick, densely dotted] (-1.4,1.4) to +(+0.15,+0.7) node[anchor=south] {$= L_1$};
          \draw[rounded corners, orange, very thick] (-0.7,-0.4) rectangle (-2.1,1.5);
          \draw[orange, thick] (-1.6,1.5) to +(-0.3,+0.6) node[anchor=south] {$J^\star_1$};
          \draw[blue, very thick, densely dotted] (-0.4,-0.3) rectangle (1.1,1.4);
          \draw[blue, thick, densely dotted] (1.1,1.1) to +(+0.7,+0.2) node[anchor=west] {$L_2$};
          \draw[rounded corners, blue, very thick] (-0.6,-0.4) rectangle (1.3,2.4);
          \draw[blue, thick] (1.3,2.1) to +(+0.4,+0.2) node[anchor=west] {$J^\star_2$};
          \draw[green!60!black, very thick, densely dotted] (-0.4,-0.6) rectangle (1.1,-2.0);
          \draw[green!60!black, thick, densely dotted] (1.1,-1.7) to +(+0.7,-0.2) node[anchor=west] {$L_3$};
          \draw[rounded corners, green!60!black, very thick] (-0.5,+0.4) rectangle (1.2,-3.0);
          \draw[green!60!black, thick] (1.2,-2.5) to +(+0.5,-0.2) node[anchor=west] {$J^\star_3$};
          \draw[purple, very thick, densely dotted] (-0.8,-0.5) rectangle (-2.0,-2.0);
          \draw[purple, thick, densely dotted] (-1.6,-2.0) to +(-0.2,-0.6) node[anchor=north] {$L_4$};
          \draw[rounded corners, purple, very thick] (0.4,0.5) rectangle (-3.0,-2.1);
          \draw[purple, thick] (-2.4,-2.1) to +(-0.2,-0.5) node[anchor=north] {$J^\star_4$};
        \end{tikzpicture}
      \end{center}
    \end{minipage}
  }%
  \hfill
  \subfloat[%
    Corresponding incomplete McCormick linearization from the proof.
    The unprocessed nodes $\U = \{ \textcolor{orange}{\{1,2,3\}}$, $\textcolor{blue}{\{4,5,6\}}$, $\textcolor{green!60!black}{\{7,8\}}$, $\textcolor{purple}{\{9,10\}}$, $\{11,12\}$, $\{13\}$, $\{14,15\} \}$ are depicted with a dashed border.
    These are pairwise disjoint and hence the completion of the linearization is arbitrary.
    \label{fig_construction_linearization}
  ]{%
    \begin{minipage}{0.62\textwidth}
      \begin{center}
        \begin{tikzpicture}[
            node/.style={draw=black, thick, rounded corners},
            arc/.style={draw=black, very thick, ->},
          ]
          \node[node, red] (1-2-3-4-5-6-7-8-9-10) at (0.0,0.0) {$\{1,2,3,4,5,6,7,8,9,10\}$};
          \node[node, black] (1-2-3-4-5-6-7-8) at (-1.0,-1.0) {$\{1,2,3,4,5,6,7,8\}$};
          \node[node, black] (1-2-3-4-5-6) at (-2.0,-2.0) {$\{1,2,3,4,5,6\}$};
          \node[node, orange, dashed] (1-2-3) at (-4.0,-3.0) {$\{1,2,3\}$};
          \node[node, blue, dashed] (4-5-6) at (-2.0,-3.0) {$\{4,5,6\}$};
          \node[node, green!60!black, dashed] (7-8) at (-0.25,-3.0) {$\{7,8\}$};
          \node[node, purple, dashed] (9-10) at (1.5,-3.0) {$\{9,10\}$};
          \node[node, blue] (4-5-6-11-12) at (-4.0,-1.0) {$\{4,5,6,11,12\}$};
          \node[node, green!60!black] (7-8-13) at (2.4,-1.0) {$\{7,8,13\}$};
          \node[node, purple] (9-10-14-15) at (3.8,-2.0) {$\{9,10,14,15\}$};

          \node[node, black, dashed] (11-12) at (-4.5,-4.5) {$\{11,12\}$};
          \node[node, black, dashed] (13) at (+2.4,-4.5) {$\{13\}$};
          \node[node, black, dashed] (14-15) at (+3.8,-4.5) {$\{14,15\}$};

          \draw[arc] (1-2-3-4-5-6-7-8-9-10) to (1-2-3-4-5-6-7-8);
          \draw[arc] (1-2-3-4-5-6-7-8-9-10) to[bend left] (9-10);
          \draw[arc] (1-2-3-4-5-6-7-8) to (1-2-3-4-5-6);
          \draw[arc] (1-2-3-4-5-6-7-8) to[bend left] (7-8);
          \draw[arc] (1-2-3-4-5-6) to (1-2-3);
          \draw[arc] (1-2-3-4-5-6) to (4-5-6);
          \draw[arc] (4-5-6-11-12) to[bend right] (4-5-6);
          \draw[arc] (4-5-6-11-12) to[bend right] (11-12);
          \draw[arc] (7-8-13) to (7-8);
          \draw[arc] (7-8-13) to (13);
          \draw[arc] (9-10-14-15) to (9-10);
          \draw[arc] (9-10-14-15) to (14-15);
        \end{tikzpicture}
      \end{center}
    \end{minipage}
  }%
  \caption{Hyperedges for a flower inequality with four neighbors and the corresponding incomplete linearization from the proof that \ref{thm_equivalent_mccormick} is contained in \ref{thm_equivalent_flower}.}
  \label{fig_construction}
\end{figure}

After all $J^\star_i$ have been processed, we process all unprocessed nodes:
While $\U \neq \emptyset$, pick a node $I \in \U$.
If $I \in \V$, then we continue.
Otherwise, $|I| \geq 2$ must hold due to $\singletons \subseteq \V$.
Arbitrarily partition $I = I_1 \cup I_2$ with $I_1,I_2 \neq \emptyset$.
Add the nodes $I_1,I_2$ to~$\V$ (unless they exist) and the arcs $(I, I_1),(I, I_2)$ to~$\A$.
Remove~$I$ from~$\U$ and add~$I_1$ and~$I_2$ to it.
Note that this process terminates since the cardinalities of the nodes $I_1,I_2$ that are added to~$\U$ are smaller than that of the removed set $I$.

It is now easily verified that~$\D$ is a recursive linearization for~$G$ that is, by construction, binary and partitioning.
It has the following properties: $\D$ contains, for each $i \in \{1,2,\dotsc,k\}$ a path (of length~$0$ or~$1$) from $J^\star_i$ to~$L_i$ as well as a path from $I^\star$ to~$L_i$ (of length $k-i+1$ with the inner nodes $L_1 \cup L_2 \cup \dotsb \cup L_j$ for $j=k,k-1,\dotsc,i+1$ in that order).
We claim that the following inequalities are valid for $P(\D)$:
\begin{subequations}
  \label{eq_implied}
  \begin{alignat}{7}
    (1-z_{J^\star_i}) - (1-z_{L_i}) &\geq 0 &\quad& i = 1,2,\dotsc, k \label{eq_implied_short} \\
    z_{(L_1 \cup L_2 \cup \dotsb \cup L_j)} + (1-z_{(L_1 \cup L_2 \cup \dotsb \cup L_{j-1})}) + (1-z_{L_j}) & \geq 1 &\quad& j=2,3,\dotsc,k \label{eq_implied_long}
  \end{alignat}
\end{subequations}
Note that if $L_i = J^\star_i$, the two variables in~\eqref{eq_implied_short} are the same, and otherwise the inequality follows from~\eqref{model_linearization_short} due to the arc $(J^\star_i,L_i) \in \A$.
Inequality~\eqref{eq_implied_long} corresponds to~\eqref{model_linearization_long} for the two arcs that leave $L_1 \cup L_2 \cup \dotsb \cup L_j$.
The sum of all inequalities~\eqref{eq_implied_long} reads
\begin{gather*}
  z_{I^\star} + \sum_{i=1}^k (1-z_{L_i}) \geq 1.
\end{gather*}
Adding~\eqref{eq_implied_short} for $i=1,2,\dotsc,k$ yields the desired flower inequality~\eqref{eq_flower} centered at $I^\star$ with neighbors $J^\star_1$, $J^\star_2,\dotsc,J^\star_k$.
Clearly, this inequality is also implied by the projection $P_{\E}(\D)$ of $P(\D)$ onto the variables~$z_I$ for $I \in \E \cup \singletons$, which concludes the proof.
\end{proof}

\section{Discussion}
\label{sec_conclusions}

We would like to conclude our paper with the following consequences for computational complexity.
Suppose we have a family of hypergraphs $G = (V,\E)$ for which we can solve the separation problem for extended flower inequalities~\eqref{eq_flower} in polynomial time.
For instance, this is the case if the degree of the linearized polynomials is bounded by a constant, since then we need to consider only flowers whose number of neighbors is bounded by a constant.
The straight-forward way of using such a separation algorithm is to maintain a subset of generated extended flower inequalities and, whenever necessary, query the algorithm with a point $\hat{z} \in \R^{\singletons \cup \E}$ to check if $\hat{z} \in \flowerRel(G)$ holds or, if a violated inequality is determined, augment our subset.
Note that such a point $\hat{z}$ then typically satisfies the generated inequalities, say, in the context of the Ellipsoid method~\cite{Khachiyan79} or when doing row generation using the Simplex method~\cite{Dantzig51}.
As an alternative, we could now maintain a set of recursive linearizations.
For a point $\hat{z}$ we can now run the same algorithm, but apply our construction from the proof of \cref{thm_equivalent} (that \ref{thm_equivalent_mccormick} is contained in \ref{thm_equivalent_flower}) to obtain a new linearization that we add to our set.
The disadvantage is that adding such a new recursive linearization requires us to add new variables and new inequalities.
An advantage could be that a new recursive linearization can imply several flower inequalities at once.
This potential is illustrated in \cref{fig_mccormick_vs_flower}, where a quadratic number of flower inequalities is replaced by only one additional node in the recursive linearization.

\begin{figure}[htpb]
  \begin{center}
    \begin{tikzpicture}[
      yscale=0.70,
      vertex/.style={draw=black, very thick},
      ]
      \node[vertex] (u1) at (-0.33,0) {$u_1$};
      \node[vertex] (u2) at (0.33,0) {$u_2$};

      \foreach \k in {1, 2, 3, 4, 5, 6, 7, 8, 9, 10, 11, 12}{%
        \begin{scope}[rotate=30*\k]
          \node[vertex] (v-\k) at (3.0, 0) {$v_{\k}$};
          \draw[very thick] (15mm,0) ellipse (27mm and 10mm);
        \end{scope}
      }
    \end{tikzpicture}
  \end{center}
  \caption{%
    An instance in which several extended flower inequalities are captured by one recursive McCormick linearization.
    The hypergraph $G = (V,\E)$ has $V = \{ \{v_i, w_i\} \mid i = 1,2,\dotsc,k \} \cup \{ \{u_1, u_2\} \}$ and $\E = \{ \{ u_1, u_2, v_i, w_i \} \mid i = 1,2,\dotsc,k \}$ (for $k=12$).
    It has $k(k-1)$ non-redundant extended flower inequalities by considering each edge as the center edge and choosing any other edge as the unique neighbor.
    However, it admits a recursive McCormick linearization with one additional variable $z_{\{u_1, u_2\}}$ and $2k+3$ extra inequalities ($z_{\{u_1,u_2,v_i\}} \leq z_{\{u_1,u_2\}}$ and $z_{\{u_1,u_2,v_i\}} + (1-z_{\{u_1,u_2\}}) + (1-z_{v_i}) \geq 1$ for $i=1,2,\dotsc,k$ as well as those for $z_{\{u_1,u_2\}} = z_{u_1} \cdot z_{u_2}$), which turn $3k$ inequalities ($z_{\{u_1,u_2,v_i\}} \leq z_{u_1}$, $z_{\{u_1,u_2,v_i\}} \leq z_{u_2}$ and $z_{\{u_1,u_2,v_i\}} + (1-z_{u_1} + (1-z_{u_2}) + (1-z_{v_i}) \geq 1$ for $i=1,2,\dotsc,k$) from the standard relaxation redundant.
  }
  \label{fig_mccormick_vs_flower}
\end{figure}

Let us consider the intersection of two projected relaxations $P_{\E}(\D^{(1)}), P_{\E}(\D^{(2)})$ for linearizations $\D^{(j)} = (\V^{(j)}, \A^{(j)})$, $j=1,2$ for some hypergraph $G = (V,\E)$.
When working with recursive linearizations one would not carry out the projection, but actually maintain variables for all nodes in $\V^{(1)}$ and in $\V^{(2)}$.
The intersection is achieved by identifying the variables $z_I$ for all $I \in \E$.
However, if there is another variable $I \in (\V^{(1)} \cap \V^{(2)}) \setminus \E$, then it formally exists in both formulations, say as $z^{(1)}_I$ and $z^{(2)}_I$ and has the intended meaning of $z^{(j)}_I = \prod_{i \in I} x_i$ for $j=1,2$ in both linearizations.
Hence, we can identify these variables $z^{(1)}_I = z^{(2)}_I$ with each other.
First, this reduces the number of variables.
Second, if some arcs (that involve $I$) exist in both recursive linearizations, then this may also reduce the number of inequalities.
Notice that, after doing this, there might be multiple inequalities~\eqref{model_linearization_long} for a single $I \subseteq V$.
Third, this might strengthen the formulation.
In fact, \emph{running intersection inequalities}~\cite{DelPiaK21} can be seen as a strengthened version of flower inequalities.
We leave it as an open problem to investigate how this strengthening of intersected recursive linearizations compares to strengthening of flower inequalities to running intersection inequalities.

\bibliographystyle{plainurl}
\bibliography{mccormick-strikes-back}

\end{document}